\begin{document}

\title{ON DETERMINING THE NUMBER OF SPIKES IN A HIGH-DIMENSIONAL SPIKED POPULATION MODEL}

\author{DAMIEN PASSEMIER}

\address{IRMAR, Universit\'e de Rennes 1, Campus de Beaulieu\\35042 Rennes Cedex, France\\
\email{damien.passemier@univ-rennes1.fr}}

\author{JIAN-FENG YAO}

\address{Department of Statistics and Actuarial Science\\The University of Hong Kong\\ Pokfulam, Hong Kong\\
\email{jeffyao@hku.hk}}

\maketitle

\begin{history}
31 March 2011\footnote{Preprint of an article submitted for consideration in ``Random Matrices: Theory and Applications (RMTA)" \textcopyright 2010 [copyright World Scientific Publishing Company] \url{http://www.worldscinet.com/rmta/}}
\end{history}

\begin{abstract}
In a spiked population model, the population covariance matrix has all its eigenvalues equal to units except for a few fixed eigenvalues (spikes). Determining the number of spikes is a fundamental problem which appears in many scientific fields, including signal processing (linear mixture model) or economics (factor model).  Several recent papers studied the asymptotic behavior of the eigenvalues of the sample covariance matrix (sample eigenvalues) when the dimension of the observations and the sample size both grow to infinity so that their ratio converges to a positive constant. Using  these results, we propose a new estimator based on the difference between two consecutive sample eigenvalues.
\end{abstract}

\keywords{Spiked population model; High-dimensional statistics; Sample covariance matrices; Factor model; Extreme eigenvalues; Tracy-Widom laws.}

\ccode{Mathematics Subject Classification 2000: 62F07, 62F12, 60B20}

\section{Introduction}

In a spiked population model, the population covariance matrix has all its eigenva\-lues equal to units except for a few fixed eigenvalues (spikes). This model appears in many scientific fields often with different names. In economics, it is called ``factors model'' within the Ross Arbitrage Pricing Theory (APT) and the aim is to relate observed data (assets) to a small dimensional set of unobserved variables which are then estimated \cite{Ross}. In physics of mixture, ``linear mixture model" are naturally considered for various phenomena \cite{Naes}. In wireless communication, a signal emitted by a source is modulated and received by an array of antennas which will permit the reconstruction of the original signal.\\

An important question to be addressed under this model is how many factors/ components/signals there are. It is generally a first step preliminary to any further study such as estimation and forecasting.\\

Many methods for determining the number of factors have been developed, based on the minimum description length (MDL), Bayesian model selection or Bayesian Information Criteria (BIC) (See \cite{Bai-Ng}). Nevertheless, these methods are based on asymptotic expansions for large sample size and may not perform well when the dimension of the data $p$ is large compared to the sample size $n$. To avoid this problem of high dimension, several methods have been recently proposed using the random matrix theory, such as Harding \cite{Harding} or Onatski \cite{Onatski} in economics, and Kritchman \& Nadler \cite{Nadler} in array processing or chemometrics literature.\\

In this paper, we present a new estimator for the number of spikes from high-dimensional data. Our approach is based on the results of Bai \& Yao \cite{Bai-Yao} and Paul \cite{Paul} which give the limiting distributions of the extreme eigenvalues of a sample covariance matrix coming from a spiked population model, and a recent result of Benaych-Georges, Guionnet \& Maida \cite{Maida}. The obtained results are presented in Section 3.\\

The remaining sections of the paper are organized as follows. In Section~2, we introduce the spiked population model, and recall known results on the almost sure limits of extreme eigenvalues which lead to the idea of our estimator. In Section~3 we define precisely our estimator and prove its consistency in the case of simple spikes with known variance. Next we give a method of estimation in the case of simple spikes with unknown variance. In Section 4, we define the factor/linear mixture model that we link to the spiked population model and we compare our method to those of Harding \cite{Harding} and Kritchman \& Nadler \cite{Nadler}. We consider the case of spikes with greater multiplicity in Section 5. Finally, we discuss the extension to the generalized spiked population model. Throughout the paper, simulation experiments are conducted to access the quality of the proposed estimation.

\section{Spiked Population Model}

We consider $\mathsf{x}=EV^\frac{1}{2}\mathsf{y}$, where $\mathsf{y} \in \mathbb{R}^p$ is a zero-mean random vector of i.i.d. components, $E$ is an orthogonal matrix and
$$V=\mbox{cov}(\mathsf{x})= \sigma^2 \left ( \begin{array}{cc} \Sigma_{q_0} & 0 \\ 
								      0 & I_{p-q_0} \end{array} \right )\text{,}$$
where $\Sigma$ has $K$ non null and non unit eigenvalues $(\alpha_k)_{1\le k \le K}$ with respective multiplicity $(n_k)_{1\le k \le K}$ ($n_1+\dots+n_K=q_0$). Therefore, the eigenvalues of the population covariance matrix $V$ are unit except the $\alpha_j$, called spike eigenvalues. Notice that, if the observations are Gaussian, we may assume that $V$ is diagonal by using a suitable orthogonal transformation.

Let  $(\mathsf{x}_i)_{(1 \le i \le n)}$ be $n$ independent copies of $\mathsf{x}$. The sample covariance matrix is
$$\mathsf{S}_n= \frac{1}{n} \sum_{i=1}^n \mathsf{x}_i \mathsf{x}_i^*\text{.}$$

It is assumed in the sequel that $q_0$ is fixed, and $p$ and $n$ are related so that when $n \rightarrow +\infty$, $\frac{p}{n} \rightarrow c > 0$. Moreover, we assumed that $\alpha_1 > \dots > \alpha_K > 1+\sqrt{c}$ for all $i \in \{1,\dots,K\}$. For $\alpha \ne 1$, we define the function
$$\phi(\alpha) = \alpha + \frac{c\alpha}{\alpha-1}\text{.}$$

Let $\lambda_{n,1} \ge \lambda_{n,2} \ge \dots \ge \lambda_{n,p}$  be the eigenvalues of the sample covariance matrix $\mathsf{S}_n$. Let $s_i = n_1+\dots+n_i$ for $1\le i \le K$. Baik and Silverstein \cite{Baik-Silverstein} proved that, under a moment condition on $\mathsf{x}$, for each $k \in \{1,\dots,K\}$ and $s_{k-1} < j \le s_k$ almost surely,
$$\lambda_{n,j}\longrightarrow \sigma^2\phi(\alpha_k)\text{.}$$

In other words, with the hypotheses that $\alpha_k > 1+\sqrt{c}$ for all $k$, and has multiplicity $n_k$, then $\phi(\alpha_k)$ is the limit of $n_k$ packed sample eigenvalue $\{\lambda_{n,j}$, $s_{k-1} +1 \le j \le s_k\}$. They also prove that for all $1\le i \le L$ with a prefixed range $L$ almost surely,
$$\lambda_{n,q_0+i} \rightarrow b=\sigma^2(1+\sqrt{c})^2 \text{.}$$

Our aim is to estimate $q_0$ when only $\mathsf{S}_n$ is known. The idea is to use, as suggested in Onatski \cite{Onatski}, differences between consecutives eigenvalues
$$\delta_{n,j}=\lambda_{n,j}-\lambda_{n,j+1}\text{.}$$
Indeed, applying the results quoted above it is easy to see that a.s. if $j \ge q_0$, $\delta_{n,j} \rightarrow 0$ while when $j < q_0$, $\delta_{n,j}$ tends to a positive limit if the $\alpha_k$ are different. Thus it is possible to detect $q_0$ from index-numbers $j$ where $\delta_{n,j}$ becomes small.

\section{Case of Simple Spikes with Known Variance $\sigma^2$}

In this section, we suppose that $\sigma$ is known and that all the spikes are simple, i.e $n_1=\dots=n_K=1$. Under these hypotheses the population eigenvalues are
$$\mbox{spec}(V)=  \sigma^2( \underbrace{\alpha_1,\cdots,\alpha_{q_0}}_{q_0},\underbrace{1,\cdots,1}_{p-q_0}  )\text{.}$$
We also need the following assumption:

\begin{assumption} \label{hypothese2}
The entries $\mathsf{y}^i$ of the random vector $\mathsf{y}$ have a symmetric law and a sub-exponential decay, that is there exists positive constants C, C' such that, for all $t\ge \mbox{C'}$,
$$\mathbb{P}(|\mathsf{y}^i| \ge t^{C}) \le e^{-t}\text{.}$$
\end{assumption}
\noindent Especially, the Gaussian vectors satisfy this hypothesis.\\

As stated previously the main observation is that when one follows the sample eigenvalues in a descending order, the successive spacings $\delta_{n,j}$ shrink to small values when approaching non-spiked values. Therefore, our estimation method will use a carefully determined threshold $d_n$. We propose to estimate $q_0$ by the following
$$\hat{q}_n=\mbox{max}\{ j \in \{1,\dots,s\} : \forall k \in \{1,\dots,j\}\mbox{, }\delta_{n,j} \ge d_n \mbox{ and }\delta_{n,j+1} < d_n \}\text{,}$$
where $s >q_0$ is a fixed number big enough, and $d_n$ is a level to determine. In practice, the integer $s$ should be thought as a preliminary bound on the number of possible spikes.

\subsection{Consistency}

\begin{theorem} \label{consistance}
Let  $(\mathsf{x}_i)_{(1 \le i \le n)}$ be $n$ copies i.i.d. of $\mathsf{x}=EV^\frac{1}{2}\mathsf{y}$, where $\mathsf{y} \in \mathbb{R}^p$ is a zero-mean random vector of i.i.d. components which satisfies Assumptions \ref{hypothese2} and $E$ is an orthogonal matrix. Assume that
$$V=\mbox{cov}(\mathsf{x})= \sigma^2 \left ( \begin{array}{cc} \Sigma_{q_0} & 0 \\ 
								      0 & I_{p-q_0} \end{array} \right )$$
where $\Sigma$ has $q_0$ non null, non unit and different eigenvalues $\alpha_1 > \dots > \alpha_{q_0} > 1+\sqrt{c}$. Assume that $\frac{p}{n} \rightarrow c >0$ when $n \rightarrow +\infty$.\\
Let $(d_n)_{n \ge 0}$ be a real sequence such that $d_n \rightarrow 0$ and $n^{2/3}d_n \rightarrow +\infty$. Then the estimator $\hat{q}_n$ is strongly consistent, i.e $\widehat{q}_n \rightarrow q_0$ almost surely when $n \rightarrow +\infty$.
\end{theorem}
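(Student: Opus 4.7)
The plan is to express the event $\{\widehat q_n = q_0\}$ as the conjunction of
\begin{align*}
A_n &= \{\delta_{n,j} \ge d_n \text{ for all } 1 \le j \le q_0\},\\
B_n &= \{\delta_{n,q_0+1} < d_n\},
\end{align*}
and to show that each holds eventually almost surely. On $A_n \cap B_n$ the index $q_0$ belongs to the set over which the maximum defining $\widehat q_n$ is taken, while any $j > q_0$ is excluded since the requirement $\delta_{n,k} \ge d_n$ for all $k \le j$ already fails at $k = q_0+1$; hence $\widehat q_n = q_0$.

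For $A_n$ the main ingredient is the Baik--Silverstein limit recalled in Section~2. Since all spikes are simple and $\alpha_1 > \cdots > \alpha_{q_0} > 1+\sqrt{c}$, for each $1 \le j < q_0$ one has $\lambda_{n,j} \to \sigma^2\phi(\alpha_j)$ and $\lambda_{n,j+1} \to \sigma^2\phi(\alpha_{j+1})$ almost surely, so, using strict monotonicity of $\phi$ on $(1+\sqrt c, +\infty)$,
$$\delta_{n,j} \longrightarrow \sigma^2\bigl(\phi(\alpha_j) - \phi(\alpha_{j+1})\bigr) > 0 \quad\text{a.s.}$$
For $j = q_0$ the last spike separates from the bulk edge, giving $\delta_{n,q_0} \to \sigma^2(\phi(\alpha_{q_0}) - (1+\sqrt c)^2) > 0$ a.s. As $d_n \to 0$, these finitely many positive a.s.\ limits eventually exceed $d_n$, and $A_n$ holds eventually a.s.

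The delicate event is $B_n$. Both $\lambda_{n,q_0+1}$ and $\lambda_{n,q_0+2}$ converge a.s.\ to $b = \sigma^2(1+\sqrt c)^2$, but since $d_n \to 0$ the inequality $\delta_{n,q_0+1} < d_n$ does not follow from the bare limits: one needs the rate dictated by the Tracy--Widom fluctuations at the soft edge. This is where Assumption~\ref{hypothese2} is used, through the results of Bai--Yao, Paul and Benaych-Georges--Guionnet--Maida, which supply concentration estimates of the form $\mathbb P(|\lambda_{n,q_0+i}-b| \ge t) \le C\exp(-cnt^{3/2})$ for $i = 1, 2$. Combining with $\delta_{n,q_0+1} \le |\lambda_{n,q_0+1} - b| + |\lambda_{n,q_0+2} - b|$ gives
$$\mathbb P\bigl(\delta_{n,q_0+1} \ge d_n\bigr) \le C\exp\bigl(-c(n^{2/3}d_n)^{3/2}\bigr),$$
and the hypothesis $n^{2/3}d_n \to +\infty$ makes the right hand side summable, so Borel--Cantelli yields $B_n$ eventually a.s.

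The main obstacle is this Borel--Cantelli step for $B_n$: everything rests on having sharp edge concentration for the top few non-spiked sample eigenvalues, which is precisely what Assumption~\ref{hypothese2} (symmetric, sub-exponentially decaying entries of $\mathsf y$) makes available via the BGM tail estimates. Without such bounds one would obtain only convergence in probability, not the claimed strong consistency; the scaling threshold $n^{-2/3}$ in the hypothesis on $d_n$ is dictated exactly by the Tracy--Widom rate of the non-spike eigenvalues.
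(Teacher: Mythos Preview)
Your decomposition into $A_n$ and $B_n$ matches the paper's, but the arguments differ. For $A_n$ you use the almost-sure Baik--Silverstein limits directly; this is cleaner than the paper, which instead invokes the CLT of Bai--Yao/Paul (Proposition~\ref{Paul}) to show only $\mathbb{P}(\delta_{n,j} < d_n) \to 0$. Your version is correct and, unlike the paper's, immediately gives that $A_n$ holds eventually almost surely. For $B_n$ the paper appeals only to Proposition~\ref{Maida}, which states $n^{2/3}(\lambda_{n,q_0+i}-b) = O_{\mathbb P}(1)$ (mere tightness), and concludes from this that $\mathbb{P}(\delta_{n,q_0+1} \ge d_n) \to 0$. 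Consequently the paper's own proof establishes $\mathbb{P}(\widehat q_n = q_0) \to 1$, i.e.\ weak consistency, not the almost-sure convergence asserted in the theorem statement.

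You correctly diagnose that more than tightness is needed for strong consistency, and your Borel--Cantelli strategy is the natural upgrade. The gap is that the tail bound you invoke, $\mathbb{P}(|\lambda_{n,q_0+i}-b| \ge t) \le C\exp(-cnt^{3/2})$, is not what the cited results provide: Proposition~\ref{Maida} as stated gives only $O_{\mathbb P}(1)$, and neither Bai--Yao nor Paul supplies a non-asymptotic concentration inequality for the non-spike edge eigenvalues in the spiked model. To close the Borel--Cantelli step you would need to cite or prove an explicit Tracy--Widom-type tail estimate valid under Assumption~\ref{hypothese2}; without it your proof, like the paper's, delivers only convergence in probability.
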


In the sequel, we will assume that $\sigma^2=1$ (If it is not the case, we consider $\frac{\lambda_{n,j}}{\sigma^2}$). For the proof, we need two theorems. The first, Proposition \ref{Paul}, shows that the limiting law of $\lambda_{n,j} - \phi(\alpha_j)$ is Gaussian (Bai and Yao \cite{Bai-Yao} and Paul \cite{Paul}):

\begin{proposition} \label{Paul}
Assume that the entries $\mathsf{x}^i$ of $\mathsf{x}$ satisfy $\mathbb{E}(\|\mathsf{x}^i\|^4) < +\infty$, $\alpha_j~>~1+\sqrt{c}$ for all $1 \le j \le q_0$ and have multiplicity 1. Then as $p$, $n \rightarrow +\infty$ so that $\frac{p}{n} \rightarrow c$,
$$\sqrt{n}(\lambda_{n,j} - \phi(\alpha_j)) \overset{\mathcal{L}}{\longrightarrow} \mathcal{N}(0,\sigma^2(\alpha_j))$$
where $\sigma^2(\alpha_j) = 2\alpha_j^2 \left ( 1-\frac{c}{(\alpha_j-1)^2}\right )$.
\end{proposition}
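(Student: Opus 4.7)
The plan is to establish the CLT via the standard secular-equation route used by Bai--Yao and Paul. First, by orthogonal invariance of $S_n$ (after the rotation by $E$), I can assume $V$ is diagonal, and split the data matrix $X=(\mathsf{x}_1,\ldots,\mathsf{x}_n)$ into the spike block $X_1\in\mathbb{R}^{q_0\times n}$ with covariance $\Sigma_{q_0}$ and the bulk block $X_2\in\mathbb{R}^{(p-q_0)\times n}$ with identity covariance. A truncation/centering argument under the fourth moment hypothesis lets me assume the entries are bounded with only a negligible modification. Since $\alpha_j>1+\sqrt{c}$, the $j$-th sample eigenvalue $\lambda_{n,j}$ separates from the bulk support $[(1-\sqrt{c})^2,(1+\sqrt{c})^2]$ almost surely (by Baik--Silverstein and the rigidity/no-outlier results outside the bulk, which is where the symmetric, sub-exponential-type control on $\mathsf{y}$ is used in the spike literature). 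So $\lambda_{n,j}$ solves a secular equation in the spike block.

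The key algebraic step is the Schur-complement identity: outside the spectrum of $S_{2,n}:=n^{-1}X_2X_2^*$, $\lambda$ is an eigenvalue of $S_n$ if and only if $\det K_n(\lambda)=0$, where
\[
K_n(\lambda)\;=\;\Sigma_{q_0}^{1/2}\!\left(\tfrac{1}{n}X_1X_1^*+\tfrac{1}{n^2}X_1X_2^*(\lambda I-S_{2,n})^{-1}X_2X_1^*\right)\!\Sigma_{q_0}^{1/2}-\lambda I_{q_0}.
\]
Conditional on $X_2$, the columns of $X_1$ are i.i.d.\ with identity covariance (after absorbing $\Sigma_{q_0}^{1/2}$), so the entries of the bracketed matrix are quadratic forms $\tfrac{1}{n}Y^*A(\lambda)Y$ and $\tfrac{1}{n}Y^*B(\lambda)Y$ in independent columns, with $A,B$ deterministic polynomials in the bulk resolvent $G_2(\lambda)=(\lambda I-S_{2,n})^{-1}$. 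By the Marchenko--Pastur theory, $\tfrac{1}{p}\operatorname{tr} G_2(\lambda)\to m_c(\lambda)$, the Stieltjes transform of the MP law; the associated companion transform gives the limit $K(\lambda)=\operatorname{diag}(\alpha_k\psi(\lambda)-\lambda)$ (suitably), whose $j$-th diagonal entry vanishes precisely at $\lambda=\phi(\alpha_j)$ — the relation $\phi(\alpha_j)=\alpha_j(1+cm(\phi(\alpha_j)))$ identifies the limit.

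Next I would quantify the fluctuations of $K_n(\lambda)$ around $K(\lambda)$ uniformly on a small neighborhood of $\phi(\alpha_j)$. The classical CLT for quadratic forms (Bai--Silverstein) applied to $\tfrac{1}{n}Y^*A(\lambda)Y-\tfrac{1}{n}\operatorname{tr} A(\lambda)$, combined with the trace CLT for linear spectral statistics of $S_{2,n}$, yields
\[
\sqrt{n}\bigl([K_n(\lambda)]_{jj}-[K(\lambda)]_{jj}\bigr)\;\xrightarrow{\mathcal{L}}\;\mathcal{N}(0,\tau^2(\lambda)),
\]
with an explicit $\tau^2$; the simple-spike hypothesis ensures the off-diagonal entries of $K_n$ stay of smaller order and do not contribute. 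Finally, since the $j$-th diagonal entry vanishes at $\lambda_{n,j}$ while it equals zero in the limit at $\phi(\alpha_j)$, a one-dimensional implicit function / delta-method expansion gives
\[
\sqrt{n}\bigl(\lambda_{n,j}-\phi(\alpha_j)\bigr)\;=\;-\,\frac{\sqrt{n}\bigl([K_n(\phi(\alpha_j))]_{jj}-[K(\phi(\alpha_j))]_{jj}\bigr)}{\partial_\lambda[K(\lambda)]_{jj}\big|_{\lambda=\phi(\alpha_j)}}+o_{\mathbb{P}}(1),
\]
and the denominator is computed from $\phi'(\alpha_j)$ via the chain rule; after simplification using $m_c(\phi(\alpha_j))=-1/\alpha_j$ one recovers $\sigma^2(\alpha_j)=2\alpha_j^2(1-c/(\alpha_j-1)^2)$.

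The main obstacle is the third step: obtaining a joint CLT for the quadratic forms defining $K_n(\lambda)$ with the correct asymptotic variance, and showing that the fluctuations are actually those of $[K_n(\phi(\alpha_j))]_{jj}$ evaluated at the deterministic point rather than at $\lambda_{n,j}$. This requires uniform control of $G_2(\lambda)$ on a complex neighborhood of $\phi(\alpha_j)$ (a deterministic shift away from the bulk edge) and a careful computation of the covariance contributed by the $X_1X_1^*$ term, the quadratic-in-$G_2$ term, and the covariance between them — this is exactly where the explicit factor $(1-c/(\alpha_j-1)^2)$ emerges after nontrivial cancellations.
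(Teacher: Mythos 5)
This proposition is not proved in the paper at all: it is imported verbatim from Bai--Yao and Paul, and the text's only ``proof'' is the citation. Your outline reconstructs precisely the route of those cited works (Schur-complement reduction to the $q_0\times q_0$ secular equation $\det K_n(\lambda)=0$, almost-sure separation of $\lambda_{n,j}$ from the bulk via Baik--Silverstein, a CLT for the random quadratic/sesquilinear forms built from the bulk resolvent, then a delta-method step whose denominator is $\psi'(\alpha_j)=1-c/(\alpha_j-1)^2$), and the structural claims you make are correct, including the identification of $\phi(\alpha_j)$ through the companion Stieltjes-transform relation $\underline{m}(\phi(\alpha_j))=-1/\alpha_j$ and the observation that for a simple spike the off-diagonal entries of $K_n$ enter the determinant only at order $1/n$.

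As a standalone proof, however, there is a genuine gap, and you name it yourself: the third step. The joint CLT for $\sqrt{n}\bigl([K_n(\phi(\alpha_j))]_{jj}-[K(\phi(\alpha_j))]_{jj}\bigr)$, the uniform control of the resolvent needed to replace the random point $\lambda_{n,j}$ by the deterministic point $\phi(\alpha_j)$, and the covariance bookkeeping that produces the factor $2\alpha_j^2\bigl(1-c/(\alpha_j-1)^2\bigr)$ are exactly where all the analytic content of Bai--Yao's and Paul's papers lies; asserting that ``the classical CLT for quadratic forms yields an explicit $\tau^2$'' defers rather than supplies the argument. Two smaller points: the symmetric sub-exponential hypothesis you invoke for separation is not needed here (it belongs to the Tracy--Widom-type Proposition on the non-spiked eigenvalues; a.s.\ separation follows from the fourth-moment condition alone), and you should be aware that under only $\mathbb{E}\|\mathsf{x}^i\|^4<\infty$ the Bai--Yao variance contains an additional fourth-cumulant term, so the stated formula $\sigma^2(\alpha_j)=2\alpha_j^2\bigl(1-c/(\alpha_j-1)^2\bigr)$ is really the Gaussian-kurtosis case --- a caveat your sketch silently inherits from the statement itself.
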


The second Proposition \ref{Maida} is issued from the Proposition 5.8 of \cite{Maida}:

\begin{proposition} \label{Maida}
Under the Assumptions \ref{hypothese2}, for all $1 \le i \le L$ with a prefixed range $L$,
$$\frac{n^{\frac{2}{3}}}{\beta}(\lambda_{n,q_0+i}-b) = O_{\mathbb{P}}(1)\text{,}$$
where $\beta=(1+\sqrt{c})(1+\sqrt{c^{-1}})^{\frac{1}{3}}\text{.}$
\end{proposition}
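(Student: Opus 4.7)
The plan is to derive Proposition \ref{Maida} as a direct consequence of Proposition 5.8 of \cite{Maida} combined with a rank-bounded perturbation argument. Because $q_0$ is fixed, the spiked covariance $V$ differs from $\sigma^2 I_p$ on a subspace of fixed dimension, so the sample matrix $\mathsf{S}_n$ is a bounded-rank perturbation of a ``null'' Wishart-type matrix. The Tracy--Widom scaling at the edge for the null matrix, which is precisely the content of Proposition 5.8 of \cite{Maida}, will then transfer via Weyl-type inequalities to the eigenvalues $\lambda_{n,q_0+i}$ that sit just above the spike band.

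I would first normalize so that $\sigma^2 = 1$ and, since the spectrum of $\mathsf{S}_n$ is unchanged by an orthogonal change of basis, assume $E = I_p$. Let $Y = [\mathsf{y}_1, \ldots, \mathsf{y}_n]$ be the $p \times n$ matrix of i.i.d. components, set $X = V^{1/2} Y$ so that $\mathsf{S}_n = \tfrac{1}{n} X X^*$, and introduce the null sample covariance $\widetilde{\mathsf{S}}_n = \tfrac{1}{n} Y Y^*$. Since the entries of $Y$ satisfy Assumption \ref{hypothese2}, Proposition 5.8 of \cite{Maida} applies directly to $\widetilde{\mathsf{S}}_n$ and gives, for every prefixed $M \ge 1$,
$$\frac{n^{2/3}}{\beta}\bigl(\lambda_{n,j}(\widetilde{\mathsf{S}}_n) - b\bigr) = O_{\mathbb{P}}(1), \qquad 1 \le j \le M.$$

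Next, I would control the discrepancy between $\mathsf{S}_n$ and $\widetilde{\mathsf{S}}_n$ by rank. The matrix $V^{1/2} - I_p$ vanishes outside the $q_0$ spike coordinates, hence $X - Y = (V^{1/2} - I_p) Y$ has rank at most $q_0$, and therefore $X X^* - Y Y^* = X (X - Y)^* + (X - Y) Y^*$ has rank at most $2 q_0$. Weyl's inequalities for rank-$r$ Hermitian perturbations then deliver
$$\lambda_{n, j+2q_0}(\widetilde{\mathsf{S}}_n) \;\le\; \lambda_{n, j}(\mathsf{S}_n) \;\le\; \lambda_{n, \max(1,\,j-2q_0)}(\widetilde{\mathsf{S}}_n)$$
for every admissible $j$. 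Specializing to $j = q_0 + i$ with $1 \le i \le L$, both sides of this sandwich involve eigenvalues among the top $3 q_0 + L$ of $\widetilde{\mathsf{S}}_n$, each of which is $b + O_{\mathbb{P}}(\beta n^{-2/3})$ by the previous step. Subtracting $b$ and rescaling by $n^{2/3}/\beta$ then gives the stated $O_{\mathbb{P}}(1)$ bound for $\lambda_{n, q_0+i}(\mathsf{S}_n)$.

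The main technical point is to be sure that Proposition 5.8 of \cite{Maida} delivers the edge tightness \emph{simultaneously} for the top $3q_0 + L$ eigenvalues of the null matrix, and not only for $\lambda_{n,1}$. This is built into that proposition, which is formulated precisely for joint Tracy--Widom-type tightness of any fixed number of extreme eigenvalues of a sample covariance matrix with symmetric, sub-exponential entries. Once this probabilistic input is granted, the remainder of the argument is purely deterministic (Weyl's inequality applied to a rank-$2q_0$ perturbation), so no further probabilistic estimates are required.
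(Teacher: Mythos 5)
The paper itself does not prove this proposition: it is imported directly from Proposition 5.8 of \cite{Maida}, so you are effectively reconstructing a derivation, and your overall strategy (edge tightness for the null matrix plus a finite-rank interlacing transfer) is the natural one. However, your Weyl step has a genuine gap exactly in the range of indices that matters. The inequality $\lambda_{j}(\mathsf{S}_n) \le \lambda_{j-2q_0}(\widetilde{\mathsf{S}}_n)$ is only available for $j \ge 2q_0+1$: to upper-bound $\lambda_j(A+E)$ by an eigenvalue of $A$ using only $\mathrm{rank}(E)\le 2q_0$ you must pair it with $\lambda_{2q_0+1}(E)\le 0$, which forces the shifted index $j-2q_0$ to be at least $1$. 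Your patch $\max(1,\,j-2q_0)$ is therefore not justified for $j=q_0+i$ with $i\le q_0$ --- precisely the eigenvalues $\lambda_{n,q_0+1},\lambda_{n,q_0+2}$ that the consistency proof later uses --- and the mechanism genuinely fails in that index range: for $j\le q_0$ the analogous claim $\lambda_j(\mathsf{S}_n)\le\lambda_1(\widetilde{\mathsf{S}}_n)$ is asymptotically false, since the spiked eigenvalues escape the bulk, so no rank-only additive argument can deliver upper bounds for indices at or below the perturbation rank.

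The gap is repairable by replacing the additive rank-$2q_0$ comparison with a one-sided rank-$q_0$ one. The nonzero eigenvalues of $\mathsf{S}_n=V^{1/2}MV^{1/2}$ (with $M=\widetilde{\mathsf{S}}_n$, after your normalizations $\sigma^2=1$, $E=I_p$) coincide with those of $M^{1/2}VM^{1/2}=M+M^{1/2}(V-I_p)M^{1/2}$, and the perturbation $M^{1/2}(V-I_p)M^{1/2}$ is positive semidefinite of rank at most $q_0$ because all spikes exceed $1$. Weyl's inequalities then give the deterministic sandwich $\lambda_{q_0+i}(\widetilde{\mathsf{S}}_n)\le\lambda_{q_0+i}(\mathsf{S}_n)\le\lambda_{i}(\widetilde{\mathsf{S}}_n)$ for every $i\ge 1$, and both ends are $b+O_{\mathbb{P}}(\beta n^{-2/3})$ by the null-case input, which yields the stated conclusion. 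One further caveat: your probabilistic input assumes Proposition 5.8 of \cite{Maida} is a joint edge-tightness statement for any fixed number of top eigenvalues of the \emph{undeformed} sample covariance matrix under the symmetry and sub-exponential assumptions; check the exact formulation there (it may already be stated for the deformed model), since the entire argument hinges on which matrix that proposition addresses.
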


We also need the following lemma:
\begin{lemma} \label{lemmatight}
Let $(X_n)_{n \ge 0}$ be a tight sequence of random variables. Then for all real sequence $(u_n)_{n \ge 0}$ which diverges to infinity,
$$\mathbb{P} (|X_n| \ge u_n) \rightarrow 0\text{.}$$ 
\end{lemma}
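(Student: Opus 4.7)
The plan is to reduce the statement directly to the definition of tightness. Recall that a sequence $(X_n)_{n \ge 0}$ of real random variables is tight if and only if for every $\varepsilon > 0$ there exists a constant $M_\varepsilon > 0$ such that
$$\sup_{n \ge 0} \mathbb{P}(|X_n| > M_\varepsilon) \le \varepsilon.$$

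First I would fix $\varepsilon > 0$ and apply tightness to obtain the corresponding $M_\varepsilon$. Next, using the hypothesis that $u_n \to +\infty$, I would choose an index $N_\varepsilon$ such that $u_n > M_\varepsilon$ for every $n \ge N_\varepsilon$. For those $n$ the inclusion $\{|X_n| \ge u_n\} \subseteq \{|X_n| > M_\varepsilon\}$ holds, and therefore
$$\mathbb{P}(|X_n| \ge u_n) \le \mathbb{P}(|X_n| > M_\varepsilon) \le \varepsilon \qquad \text{for all } n \ge N_\varepsilon.$$
Since $\varepsilon$ was arbitrary, this gives $\limsup_n \mathbb{P}(|X_n| \ge u_n) \le \varepsilon$ for every $\varepsilon > 0$, hence $\mathbb{P}(|X_n| \ge u_n) \to 0$, as desired.

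There is essentially no obstacle: the lemma is little more than a reformulation of tightness combined with the divergence of $u_n$. The only mild bookkeeping point is to keep the inequalities consistent when moving from the fixed threshold $M_\varepsilon$ to the moving threshold $u_n$, which is taken care of by choosing $N_\varepsilon$ large enough that $u_n$ strictly exceeds $M_\varepsilon$.
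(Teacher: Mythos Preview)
Your proof is correct and follows the same approach as the paper: fix $\varepsilon$, use tightness to get a bounding set, use $u_n \to \infty$ to eventually dominate that set, and conclude. The only cosmetic difference is that the paper phrases tightness via a compact set $K$ while you use a numerical bound $M_\varepsilon$; for real-valued random variables these are equivalent, and the remainder of the argument is identical.
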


\begin{proof}
As $(X_n)_{n \ge 0}$ is a tight sequence, for all $\varepsilon >0$,  it exists a compact $K$ such that, for all $n \in \mathbb{N}$, $\mathbb{P}(\mathsf{X}_n \notin K) < \varepsilon$.
Furthermore, as $u_n \rightarrow +\infty$, it exists $n \in \mathbb{N}$ such that for all $n \ge N$, $[-u_n,u_n] \supset K$. So $\mathbb{P}(|\mathsf{X}_n| > u_n) \le \mathbb{P}(\mathsf{X}_n \notin K) < \varepsilon$.
Consequently, $\mathbb{P}(|\mathsf{X}_n| > u_n) \rightarrow 0$.
\end{proof}

\begin{proof} of Theorem \ref{consistance}.
We have
\begin{eqnarray*}
\{ \hat{q}_n = q_0 \} & = & \{ q_0=\mbox{max}\{j : \delta_j \ge d_n\} \} \\
 & = & \{ \forall j \in \{1,\dots,q_0\}\mbox{, }\delta_{n,j} \ge d_n \} \cap \{\delta_{n,q_0+1} < d_n\}\text{.}\\
 \end{eqnarray*}
Therefore
\begin{eqnarray*}
\mathbb{P}(\hat{q}_n = q_0) & = & \mathbb{P}\left ( \bigcap_{1\le j \le q_0} \{ \delta_{n,j} \ge d_n \} \cap \{\delta_{n,q_0+1} < d_n\} \right )\\
& = & 1 - \mathbb{P}\left ( \bigcup_{1\le j \le q_0} \{ \delta_{n,j} < d_n \} \cup \{\delta_{n,q_0+1} \ge d_n\} \right )\\
& \ge & 1 - \sum_{j=1}^{q_0} \mathbb{P}(\delta_{n,j} < d_n)-\mathbb{P}(\delta_{n,q_0+1} \ge d_n ) \text{.} \\
\end{eqnarray*}

\noindent \emph{Case of $j=q_0+1$}.
In this case, $\delta_{n,q_0+1} = \lambda_{n,q_0+1}-\lambda_{n,q_0+2}$ (non-spike eigenvalues). We consider the following sequence of random variables
$$Y_n=\frac{n^{\frac{2}{3}}}{\beta}(\lambda_{n,q_0+i}-b)\text{.}$$
By Proposition \ref{Maida}, $(Y_n)_{n \ge 1}$ is a tight sequence. So by using Lemma \ref{lemmatight}, for any sequence $(a_n)_{n \ge 0}$, $a_n \rightarrow +\infty$ we have
$$\mathbb{P} (|Y_n| \ge a_n) \rightarrow 0\text{.}$$
Therefore

\begin{eqnarray*}
\mathbb{P}(|Y_n| \le a_n) & = & \mathbb{P}\left (\frac{n^{\frac{2}{3}}}{\beta}(|lambda_{n,q_0+i}-b| \le a_n\right )\\
& = & \mathbb{P} \left ( |\lambda_{n,q_0+i}-b| \le \frac{a_n}{n^{\frac{2}{3}}} \beta \right )\\
&~& \longrightarrow 1 \text{.}
\end{eqnarray*}

We choose $d_n\rightarrow 0$ such that $n^{2/3} d_n \rightarrow +\infty$. So we have
$$\mathbb{P}(\lambda_{n,q_0+i} \in \mathsf{J}_{n}) \rightarrow 1\text{,}$$
with
$$\mathsf{J}_{n}= \left [  b \pm d_n  \right ]\text{.}$$
It follows
$$\mathbb{P}  \left( \delta_{n,q_0+1} \le d_n \right) \ge \mathbb{P} \left ( \{\lambda_{n,q_0+i} \in \mathsf{J}_{n}\}\cap\{\lambda_{n,q_0+i+1} \in \mathsf{J}_{n}\} \right ) \rightarrow 1\text{.}$$
Therefore
$$\mathbb{P}(\delta_{n,q_0+1} \ge d_n) \rightarrow 0$$

\noindent \emph{Case of $1 \le j \le q_0$}. These indices correspond to the spike eigenvalues. By using Proposition \ref{Paul} and the previous argument, it is easy to show that we can choose a real sequence $(b_n)_{n \ge 0}$, $b_n \rightarrow 0$ such that $\sqrt{n}b_n \rightarrow +\infty$ and
$$\mathbb{P}(\lambda_{n,j} \in \mathsf{I}_{n,j}) \rightarrow 1\text{,}$$
where
$$\mathsf{I}_{n,j} = \left [  \phi(\alpha_j) \pm b_n \right ]\text{.}$$
Therefore
\begin{itemize}
\item For all $1 \le j < q_0$, we have
$$\mathbb{P} \left (\delta_{n,j} \ge \phi(\alpha_j) -\phi(\alpha_{j+1}) - b_n \right ) \ge \mathbb{P} \left ( \{\lambda_{n,j} \in \mathsf{I}_{n,j}\}\cap\{\lambda_{n,j+1} \in \mathsf{I}_{n,j+1}\} \right ) \rightarrow 1\text{.}$$

Let
$$c_{n,j}=\phi(\alpha_j) -\phi(\alpha_{j+1}) -b_n\text{.}$$

\item For $j=q_0$, $\delta_{n,q_0}=\lambda_{n,q_0}-\lambda_{n,q_0+1}$. By using the first section of the proof, one can show that
$$\mathbb{P} \left (\delta_{n,q_0} \ge \phi(\alpha_{q_0}) -b - \left ( b_n+d_n \right ) \right ) \ge \mathbb{P} \left ( \{\lambda_{n,q_0} \in \mathsf{I}_{n,q_0}\}\cap\{\lambda_{n,q_0+1} \in \mathsf{J}_{n}\} \right ) \rightarrow 1\text{.}$$

Let
$$c_{n,q_0}=\phi(\alpha_{q_0}) -b - \left ( b_n + d_n \right )\text{.}$$

\item
Therefore for all $0 \le j \le q_0$ we have
\begin{eqnarray*}
\mathbb{P}(\delta_{n,j} \ge c_{n,j}) \rightarrow 1 & \Rightarrow & \mathbb{P}(\delta_{n,j} < c_{n,j})  \rightarrow 0\text{.}\\
\end{eqnarray*}
\end{itemize}

As $d_n \rightarrow 0$ and for all $1 \le j \le q_0$, $c_{n,j} \rightarrow c_j > 0$, it exists $N \in \mathbb{N}:\forall n \ge N$,
$$\mathbb{P}(\delta_{n,j} < d_n) \le \mathbb{P}(\delta_{n,j} < c_{n,j}) \rightarrow 0\text{.}$$
So we have
$$\sum_{j=1}^{q_0} \mathbb{P}( \delta_{n,j} < d_n) \rightarrow 0\text{.}$$

\noindent \emph{Conclusion}.
$\mathbb{P}(\delta_{n,q_0+1} \ge d_n)  \rightarrow 0$ and $\sum_{j=1}^{q_0} \mathbb{P}( \delta_{n,j} < d_n) \rightarrow 0$, therefore

$$\mathbb{P}(\hat{q}_n = q_0) \underset{n \rightarrow +\infty}{\longrightarrow}1\text{.}$$ 
\end{proof}

\subsection{Simulation experiments}

Now we will illustrate the previous result by some simulations. First, we have to chose the sequence $d_n$ to be used. Theoretically speaking, all the sequences satisfying the requirement $d_n\rightarrow 0$ such that $n^{2/3}d_n \rightarrow +\infty$ are convenient. We tested several sequences and we decided to take one of the form $\frac{a_n}{n^{2/3}}\beta$ which a sequence $(a_n)_{n\ge 0}$ proportional to $\sqrt{2\log\log n}$ : this idea came from that, as in the case of the mean of i.i.d random variables, the $\lambda_{n,j}$ corresponding to the non-spikes tend to a gaussian law. So we can conjecture a result analog to the law of the iterated logarithm for the $\lambda_{n,j}$, $j>q_0$. Finally, we choose $a_n=4\sqrt{2\log\log n}$ and simulate two different models: one with dispersed spikes which should lead to an easier estimation of $q_0$, and a more difficult case with closer spikes:
\vspace{0.5cm}
\begin{itemlist}
\item {\bf Model 1}: $q_0=5$, $(\alpha_1,\alpha_2,\alpha_3,\alpha_4,\alpha_5)=(259.72,17.97,11.04,7.88,4.82)$;
\item {\bf Model 2}: $q_0=4$, $(\alpha_1,\alpha_2,\alpha_3,\alpha_4)=(7,6,5,4)$.
\end{itemlist}
\vspace{0.5cm}
Note that the values of Model 1 have been chosen to be the same as in \cite{Harding}. For each model, two different values of $c$, 0.3 and 0.6, are considered. We give in Tables 1-2 and 3, respectively, the distribution of $\hat{q}_n$, its mean and mean squared error over 1000 independent replications. The frequency of $\hat{q}_n=q_0$ is given in Figure 1.

\begin{table}[!hb]
\tbl{Mean, mean squared error and empirical distribution of $\hat{q}_n$ over 1000 independent replications for Model 1.}
{\begin{tabular}{|ccc|ccccccc|}
\cline{4-10}
 \multicolumn{3}{c}{} &\multicolumn{7}{|c|}{Distribution of $\hat{q}_n$}\\
\hline
$(p,n)$ & Mean & MSE & 1 &  2  &  3  &  4  &  \textbf{5}  &  6 & 7 \\
\hline
(30,100) & 5.057 & 0.212 & 0.001 & 0.007 & 0.009 & 0.0\hphantom{00} & \textbf{0.883} & 0.1\hphantom{00} & 0.002\\
(60,200) &  5.081 & 0.107 & 0.001 & 0.001 & 0.0\hphantom{00} & 0.0\hphantom{00} & \textbf{0.91\hphantom{0}} & 0.088 & 0.0\hphantom{00}\\
(120,400) & 5.079 & 0.073 & 0.0\hphantom{00} & 0.0\hphantom{00} & 0.0\hphantom{00} & 0.0\hphantom{00} & \textbf{0.921} & 0.079 & 0.0\hphantom{00}\\
(240,800) & 5.069 & 0.064 & 0.0\hphantom{00} & 0.0\hphantom{00} & 0.0\hphantom{00} & 0.0\hphantom{00} & \textbf{0.931} & 0.069 & 0.0\hphantom{00}\\
\hline
\end{tabular}}
\end{table}

\begin{table}[!ht]
\tbl{(Continued) Mean, mean squared error and empirical distribution of $\hat{q}_n$ over 1000 independent replications for Model 1.}
{\begin{tabular}{|ccc|ccccccc|}
\cline{4-10}
 \multicolumn{3}{c}{} &\multicolumn{7}{|c|}{Distribution of $\hat{q}_n$}\\
\hline
$(p,n)$ & Mean & MSE & 1 &  2  &  3  &  4  &  \textbf{5}  &  6 & 7 \\
\hline
(60,100) & 5.056 & 0.139 & 0.001 & 0.004 & 0.003 & 0.002 & \textbf{0.914} & 0.076 & 0.0\hphantom{00}\\
(120,200) & 5.08\hphantom{0} & 0.098 & 0.0\hphantom{00} & 0.001 & 0.002 & 0.0\hphantom{00} & \textbf{0.91\hphantom{0}} & 0.087 & 0.0\hphantom{00}\\
(240,400) & 5.072 & 0.079 & 0.002 & 0.0\hphantom{00} & 0.0\hphantom{00} & 0.0\hphantom{00} & \textbf{0.924} & 0.075 & 0.0\hphantom{00}\\
(480,800) & 5.072 & 0.069 & 0.0\hphantom{00} & 0.0\hphantom{00} & 0.0\hphantom{00} & 0.0\hphantom{00} & \textbf{0.929} & 0.07\hphantom{0} & 0.001\\
\hline
\end{tabular}}
\end{table}

\begin{table}[!ht]
\tbl{Mean, mean squared error and empirical distribution of $\hat{q}_n$ over 1000 independent replications for Model 2.}
{\begin{tabular}{|ccc|cccccc|}
\cline{4-9}
 \multicolumn{3}{c}{} &\multicolumn{6}{|c|}{Distribution of $\hat{q}_n$}\\
\hline
$(p,n)$ & Mean & MSE & 0 & 1 &  2  & 3  &  \textbf{4}  &  5  \\
\hline
(30,100) & 3.718  &  1.086 & 0.0\hphantom{00} & 0.001 & 0.059 & 0.0\hphantom{00} & \textbf{0.778} & 0.085\\
(60,200) &  3.925  &  0.582  & 0.013 & 0.024 & 0.019 & 0.0\hphantom{00} & \textbf{0.857} & 0.087\\
(120,400) & 4.005  &  0.331  & 0.01\hphantom{0} & 0.01\hphantom{0} & 0.001 & 0.0\hphantom{00} & \textbf{0.902} & 0.077\\
(240,800) & 4.062  &  0.110 &  0.002 & 0.001 & 0.0\hphantom{00} & 0.0\hphantom{00} & \textbf{0.924} & 0.073\\
\hline
(60,100) & 3.478 & 1.655 & 0.053 & 0.086 & 0.059 & 0.001 & \textbf{0.734} & 0.067\\
(120,200) & 3.818 & 0.823 & 0.025 & 0.033 & 0.024 & 0.0\hphantom{00} & \textbf{0.853} & 0.065\\
(240,400) & 3.969 & 0.394 & 0.009 & 0.015 & 0.011 & 0.0\hphantom{00} & \textbf{0.893} & 0.072\\
(480,800) & 4.051  &  0.108 & 0.003 & 0.0\hphantom{00} & 0.0\hphantom{00} & 0.0\hphantom{00} & \textbf{0.934} & 0.063\\
\hline
\end{tabular}}
\end{table}

\begin{figure}[!ht]
 \begin{center}
   \includegraphics{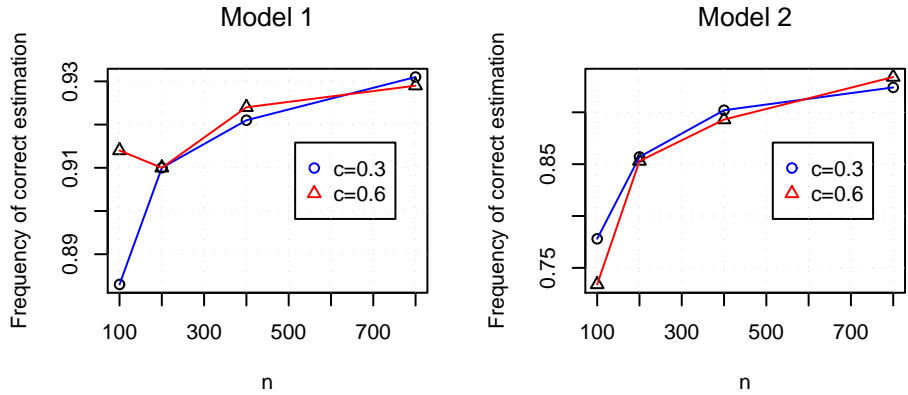}
   \caption{Frequency of $\hat{q}_n=q_0$ over 1000 independent replications.\label{1}}
 \end{center}
\end{figure}

In both cases, we can observe the asymptotic consistency of the estimator. Comparing the two models, except the last case $(p,n)=(480,800)$, the estimator performs better in Model 1 than in Model 2. This phenomenon is due to the fact that the differences between consecutive eigenvalues are smaller than in Model 2 so that it is more difficult to distinguish spikes from non spikes.\\

\noindent Within a same model, the convergence is slower in the $c=0.6$ case. We could explain this by the fact that the gap between two consecutive spike eigenvalues stays the same, and when $c$ increases, the spectrum of $\mathsf{S}_n$ is more dispersed, so that the differences $\delta_{n,j}$ from non- spikes are larger and again our detection problem is more difficult.\\

\noindent It is worth mentioning that the chosen constant $d_n=\frac{4\sqrt{2\log\log n}}{n^{2/3}}\beta$ leads to a slight  over-estimation of $q_0$ for the tested sizes $(p,n)$. This finite-sample behaviour could be improved with a more sophisticated choice of $d_n$ which however seems a difficult point to address.\\

\section{Case of Simple Spikes with Unknown Variance}

In practice, the scale parameter $\sigma^2$ is also unknown and we need to estimate it as well. First, we will explain how to do in the non-spikes (null) case, i.e. $V=\sigma^2 I_p$, and then in the case with spikes.

\subsection{Estimation of the variance in the white case}

We consider a zero-mean random vector $\mathsf{x} \in \mathbb{R}^p$ with population covariance matrix
$$V=\mbox{cov}(\mathsf{x})= \sigma^2 I_p\text{.}$$
We keep the previous assumptions. We will use the law of large numbers to estimate the unknown variance $\sigma^2$. We have the following theorem (Mar{\v c}enko and Pastur~\cite{MP}, Bai and Silverstein \cite{BS})

\begin{proposition} \label{estimation}
  Assume that, for any $\eta \ge 0$ :
  $$\frac{1}{\eta^2 np} \sum_{j,k} \mathbb{E}(|x_jk|^2 \mathds{1}_{|x_jk| \ge \eta \sqrt{n}}) \rightarrow 0 \mbox{ when } n \rightarrow +\infty\text{.}$$
  Then, with probability one, the empirical spectral distribution (ESD) $F^{S_n}$ of $S_n$ weakly converges to the Mar{\v c}enko-Pastur distribution
  with ratio index $c$ and scale parameter $\sigma^2$, denoted by $F^{c,\sigma^2}(x)$, which has a density function
  $$p_{c,\sigma^2}(x)=\left \{  \begin {array}{cl} \frac{1}{2\pi x c \sigma^2} \sqrt{(b^{+}-x)(x-b^{-})} & \mbox{ if } b^{-} \le x \le b^{+}\\
0 & \mbox{ otherwise } \end{array} \right.\text{,}$$
where $b^{-}=\sigma^2(1-\sqrt{c})^2$ and $b^{+}=\sigma^2(1+\sqrt{c})^2$.
\end{proposition}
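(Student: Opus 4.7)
The plan is to prove Proposition \ref{estimation} via the Stieltjes transform method, the classical route of Mar\v{c}enko-Pastur and its refinement by Bai and Silverstein. Without loss of generality I take $\sigma^2=1$, since the general case follows from the rescaling $S_n\mapsto S_n/\sigma^2$. For $z\in\mathbb{C}^+$ define the empirical Stieltjes transform
\[ m_n(z) \;=\; \int\frac{1}{x-z}\,dF^{S_n}(x) \;=\; \frac{1}{p}\,\mathrm{tr}\,(S_n-zI_p)^{-1}. \]
The aim is to show that $m_n(z)\to m(z)$ almost surely for every $z\in\mathbb{C}^+$, where $m$ is the Stieltjes transform of the Mar\v{c}enko-Pastur law with parameter $c$. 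Weak convergence of $F^{S_n}$ to $F^{c,1}$ then follows from the standard fact that pointwise convergence of Stieltjes transforms on $\mathbb{C}^+$ forces weak convergence of the associated probability measures.

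The first step is truncation. The Lindeberg-type hypothesis allows one to replace each entry $x_{jk}$ by $\tilde x_{jk}=x_{jk}\mathds{1}_{|x_{jk}|\le\eta_n\sqrt n}$, after recentering and rescaling, with $\eta_n\downarrow 0$ chosen slowly enough that the Hilbert-Schmidt norm of $S_n-\tilde S_n$ tends to zero in probability. A resolvent identity then gives $|m_n(z)-\tilde m_n(z)|\to 0$ almost surely, reducing the problem to entries bounded by $\eta_n\sqrt n$ with variance arbitrarily close to one. The second step is almost-sure concentration of $m_n(z)$ around its mean: one writes $m_n(z)-\mathbb{E}\,m_n(z)$ as a telescoping sum of martingale differences by conditioning successively on the columns of the data matrix, applies the rank-one perturbation bound $|\mathrm{tr}\,(A-zI)^{-1}-\mathrm{tr}\,(A_k-zI)^{-1}|\le 1/|\Im z|$ to each increment, and invokes Burkholder's inequality to obtain $\mathbb{E}\,|m_n(z)-\mathbb{E}\,m_n(z)|^4=O(n^{-2})$; the Borel-Cantelli lemma then yields almost-sure convergence to zero.

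The third and decisive step is identifying the limit of $\mathbb{E}\,m_n(z)$. Applying Schur's complement to each diagonal entry of the resolvent expresses it in terms of a quadratic form $\frac{1}{n}x_k^*(S_n^{(k)}-zI)^{-1}x_k$, where $S_n^{(k)}$ is built from the data with the $k$-th column removed and is therefore independent of $x_k$. The classical trace lemma (applicable to i.i.d.\ entries with bounded moments, as produced by the truncation) shows that this quadratic form is close to $\frac{1}{n}\,\mathrm{tr}\,(S_n^{(k)}-zI)^{-1}$, and the latter differs from $m_n(z)$ only by $O(1/n)$. Averaging over $k$ and passing to the limit yields the Mar\v{c}enko-Pastur fixed-point equation
\[ m(z) \;=\; \frac{1}{1-c-z-czm(z)}, \]
equivalently $czm(z)^2+(z-1+c)m(z)+1=0$, whose unique solution in $\mathbb{C}^+$ is the Stieltjes transform of the density $p_{c,1}$ given in the proposition, as verified by direct computation of $\int (x-z)^{-1}p_{c,1}(x)\,dx$.

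The hardest step is the coupling between truncation and quadratic-form concentration: the Lindeberg condition is strictly weaker than a finite fourth-moment assumption, so the truncation level $\eta_n$ must be tuned with some care so that the truncation error vanishes while the constants appearing in Burkholder's inequality and the trace lemma remain effective. Once these technical estimates are in place, identification of the limit is algebraic and the weak convergence of $F^{S_n}$ to $F^{c,1}$ follows immediately from the inversion theorem for Stieltjes transforms.
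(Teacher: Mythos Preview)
Your sketch is correct and follows the classical Stieltjes-transform route of Mar\v{c}enko--Pastur and Bai--Silverstein: truncation under the Lindeberg condition, martingale concentration of $m_n(z)$ via rank-one perturbation bounds and Burkholder, and identification of the limit through Schur complements and the trace lemma, leading to the self-consistent equation for $m(z)$.

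However, the paper does not give its own proof of this proposition at all. It is stated as a known result and simply attributed to Mar\v{c}enko--Pastur~\cite{MP} and Bai--Silverstein~\cite{BS}; the authors then immediately use its consequence $\frac{1}{p}\sum_i\lambda_{n,i}\to\sigma^2$ as input for their variance-estimation algorithm. So there is nothing to compare on the level of argument: you have supplied a (correct) outline of the proof found in the cited references, whereas the paper treats the statement as background. If anything, your write-up is more detailed than what the paper contains, and the ``hardest step'' you flag---tuning the truncation level $\eta_n$ so that the Lindeberg condition alone suffices without a fourth-moment assumption---is indeed the technical heart of the Bai--Silverstein refinement you are invoking.
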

Note that $\sigma^2$ represents the mean of the limiting distribution. Moreover, it is well-known that under the condition of the Proposition \ref{estimation}, it holds almost surely,
$$\widehat{\sigma}^2=\frac{1}{p} \sum_{i=1}^p \lambda_{n,i} \rightarrow \sigma^2 \text{.}$$

\subsection{Determining the number of spikes with an unknown variance.}

As we notice in the first section, when the variance is known and different of one, we only have to divide the consecutive difference $\delta_{i,n}$ by this variance. As the variance is unknown, we will replace it by the estimate $\widehat{\sigma}^2=\frac{1}{p} \sum_{i=1}^p \lambda_{n,i}$, which converges almost surely to $\sigma^2$ when $p \rightarrow +\infty$. Nevertheless, because of the spikes, the variance of $\widehat{\sigma}^2$ will be greater than the one in the null case. The variance will be minimum if we only take the mean of the non-spike eigenvalues i.e those that have an index $i \ge q_0+1$. The problem is that we don't know $q_0$. By consequence, the idea is to make a first estimation $\hat{q}_n^0$ of $q_0$ with $\widehat{\sigma}^2_0=\frac{1}{p} \sum_{i=1}^p \lambda_{n,i}$. Then, if $\hat{q}_n^0 >0$, we set $\widehat{\sigma}^2_1=\frac{1}{p-\hat{q}_n^0} \sum_{i=\hat{q}_n^0+1}^p \lambda_{n,i}$ (So we have $\widehat{\sigma}_0^2 \ge \widehat{\sigma}_1^2$), and we reestimate $q_0$ by $\hat{q}_n^1$ using this new estimation. We repeat it until we find an indice $k$ such that $\hat{q}_n^k=\hat{q}_n^{k+1}$. If such an indice doesn't exist, the algorithm will stop at the preliminary bound $k=s$ fixed initially. To sum up, here is the algorithm:

\begin{verbatim}
q1=0
sigma2=1/p*(lambda_1+...+lambda_p)
q2="estimator of the known variance case with division by sigma2"

while q2~=q1 do
 q1:=q2
 sigma2=1/(p-q1)*(lambda_(q1+1)+...+lambda_p
 q2="estimator of the known variance case with division by sigma2"
end

result=(q1,sigma2)
\end{verbatim}

\subsection{Simulation experiments}

We conduct the simulations with two values of the variance $\sigma^2=1$, and $\sigma^2=500$ to see if a high variance will influence the estimation. We keep the same other parameters as in the previous simulation study of Section 3 and estimate $\sigma^2$ and the number of spikes with the method explained above. Additional to the statistics about the spikes number estimator $\hat{q}_n$, we provide also those about the final estimate $\widehat{\sigma}^2$ of the unknown variance. The results are displayed in Tables 4 to 8.

\begin{table}[!hb]
\tbl{Mean, mean squared error and empirical distribution of $\hat{q}_n$, mean and mean squared error of $\widehat{\sigma}^2$ over 1000 independent replications for Model 1 and $\sigma^2=1$.}
{\begin{tabular}{|ccc|ccccccc|cc|}
 \cline{4-12}
 \multicolumn{3}{c}{}& \multicolumn{7}{|c|}{Distribution of $\hat{q}_n$} & \multicolumn{2}{c|}{$\widehat{\sigma}^2$}\\
\hline
$(p,n)$ & Mean & MSE & 1 & 2 & 3 & 4 & \textbf{5} & 6 & 7 & Mean & MSE\\
\hline
(30,100) & 5.052 & 0.338 & 0.003 & 0.015 & 0.008 & 0.0\hphantom{00} & \textbf{0.849} & 0.0\hphantom{00} & 0.125 & 0.955 & 0.015\\
(60,200) & 5.108 & 0.112 & 0.0\hphantom{00} & 0.001 & 0.0\hphantom{00} & 0.0\hphantom{00} & \textbf{0.89\hphantom{0}} & 0.107 & 0.002 & 0.97\hphantom{0} & 0.0\hphantom{00}\\
(120,400) & 5.069 & 0.076 & 0.0\hphantom{00} & 0.001 & 0.0\hphantom{00} & 0.0\hphantom{00} & \textbf{0.927} & 0.072 & 0.0\hphantom{00} & 0.986 & 0.0\hphantom{00}\\
(240,800) & 5.084 & 0.077 & 0.0\hphantom{00} & 0.0\hphantom{00} & 0.0\hphantom{00} & 0.0\hphantom{00} & \textbf{0.916} & 0.084 & 0.0\hphantom{00} & 0.993 & 0.0\hphantom{00}\\
\hline
\end{tabular}}
\end{table}

\begin{table}[!ht]
\tbl{(Continued) Mean, mean squared error and empirical distribution of $\hat{q}_n$, mean and mean squared error of $\widehat{\sigma}^2$ over 1000 independent replications for Model 1 and $\sigma^2=1$.}
{\begin{tabular}{|ccc|ccccccc|cc|}
 \cline{4-12}
 \multicolumn{3}{c}{}& \multicolumn{7}{|c|}{Distribution of $\hat{q}_n$} & \multicolumn{2}{c|}{$\widehat{\sigma}^2$}\\
\hline
(60,100) & 5.087 & 0.236 & 0.001 & 0.009 & 0.004 & 0.0\hphantom{00} & \textbf{0.865} & 0.122 & 0.002 & 0.943 & 0.003\\
(120,200) & 5.095 & 0.092 & 0.0\hphantom{00} & 0.0\hphantom{00} & 0.001 & 0.0\hphantom{00} & \textbf{0.902} & 0.097 & 0.0\hphantom{00} & 0.971 & 0.0\hphantom{00}\\
(240,400) &  5.07 & 0.065 & 0.0\hphantom{00} & 0.0\hphantom{00} & 0.0\hphantom{00} & 0.0\hphantom{00} & \textbf{0.93\hphantom{0}} & 0.07\hphantom{0} & 0.0\hphantom{00} & 0.985 & 0.0\hphantom{00}\\
(480,800) & 5.067 & 0.063 & 0.0\hphantom{00} & 0.0\hphantom{00} & 0.0\hphantom{00} & 0.0\hphantom{00} & \textbf{0.933} & 0.067 & 0.0\hphantom{00} & 0.993 & 0.0\hphantom{00}\\
\hline
\end{tabular}}
\end{table}

\begin{table}[!ht]
\tbl{Mean, mean squared error and empirical distribution of $\hat{q}_n$, mean and mean squared error of $\widehat{\sigma}^2$ over 1000 independent replications for Model 2 and $\sigma^2=1$.}
{\begin{tabular}{|ccc|ccccccc|cc|}
 \cline{4-12}
 \multicolumn{3}{c}{}& \multicolumn{7}{|c|}{Distribution of $\hat{q}_n$} & \multicolumn{2}{c|}{$\widehat{\sigma}^2$}\\
\hline
$(p,n)$ & Mean & MSE & 0 & 1 & 2 & 3 & \textbf{4} & 5 & 6 & Mean & MSE\\
\hline
(30,100) & 3.362  &  2.019 & 0.079 & 0.078 & 0.091 & 0.0\hphantom{00} & \textbf{0.658} & 0.094 & 0.0\hphantom{00} & 1.052 & 0.043\\
(60,200) &  3.806  &  1.023&  0.032 & 0.038 & 0.026 & 0.0\hphantom{00} & \textbf{0.805} & 0.098 & 0.001 &  0.994 & 0.005 \\
(120,400) & 3.983  &  0.483 & 0.019 & 0.008 & 0.004 & 0.0\hphantom{00} & \textbf{0.878} & 0.091 & 0.0\hphantom{00} & 0.991 & 0.001\\
(240,800)& 4.071  &  0.144 & 0.003 & 0.001 & 0.001 & 0.0\hphantom{00} & \textbf{0.907} & 0.088 & 0.0\hphantom{00} & 0.994 & 0.0\hphantom{00}\\
\hline
(60,100) & 3.367 & 1.898& 0.069 & 0.081 & 0.096 & 0.001 & \textbf{0.674} & 0.079 & 0.0\hphantom{00} & 1.003 & 0.012\\
(120,200)& 3.781 & 1.04\hphantom{0} & 0.034 & 0.034 & 0.036 & 0.0\hphantom{00} & \textbf{0.806} & 0.089 & 0.001 & 0.986 & 0.002\\
(240,400) & 3.965 & 0.472 & 0.015 & 0.015 & 0.007 & 0.0\hphantom{00} & \textbf{0.892} & 0.071 & 0.0\hphantom{00} & 0.99\hphantom{0} & 0.0\hphantom{00}\\
(480,800) & 4.052  &  0.125 & 0.002 & 0.003 & 0.0\hphantom{00} & 0.0\hphantom{00} & \textbf{0.926} & 0.069 & 0.0\hphantom{00} & 0.994 & 0.0\hphantom{00}\\
\hline
\end{tabular}}
\end{table}

\begin{figure}[!hb]
   \includegraphics{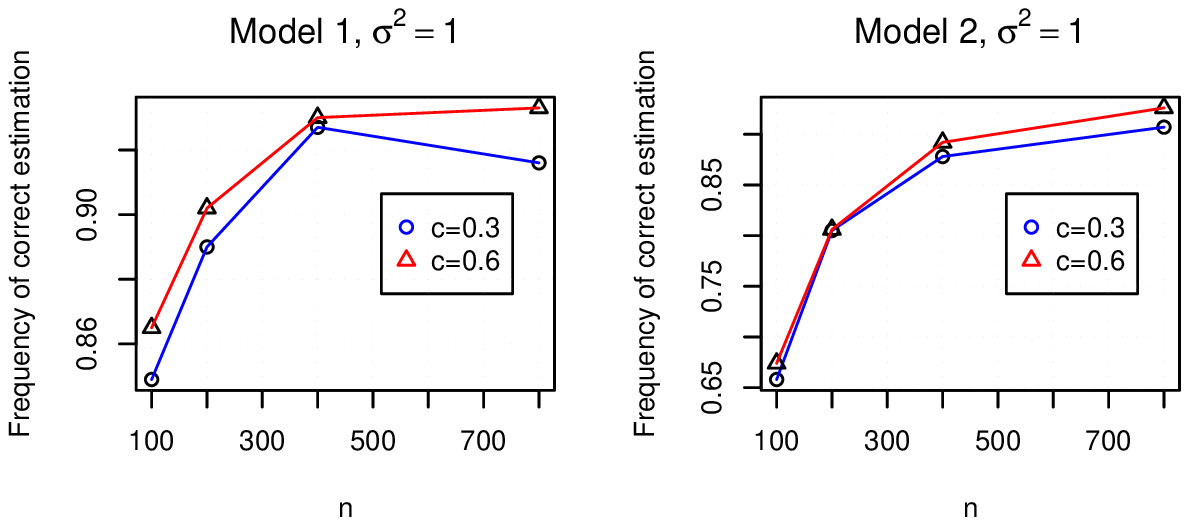}
   \caption{Frequency of $\hat{q}_n=q_0$ over 1000 independent replications with $\sigma^2=1$.}
\end{figure}

\begin{table}[!ht]
\tbl{Empirical distribution of $\hat{q}_n$, mean and mean squared error of $\widehat{\sigma}^2$ over 1000 independent replications for Model 1 and $\sigma^2=500$.} 
{\begin{tabular}{|c|ccccccc|cc|}
 \cline{2-10}
 \multicolumn{1}{c}{}& \multicolumn{7}{|c|}{Distribution of $\hat{q}_n$} & \multicolumn{2}{c|}{$\widehat{\sigma}^2$}\\
\hline
$(p,n)$ & 1 & 2 & 3 & 4 & \textbf{5} & 6 & 7 & Mean & MSE\\
\hline
(30,100) & 0.003 & 0.012 & 0.005 & 0.0\hphantom{00} & \textbf{0.823} & 0.155 & 0.002 & 474.909 & 3281.714\\
(60,200) &  0.0\hphantom{00}  & 0.001 & 0.0\hphantom{00} & 0.0\hphantom{00} & \textbf{0.904} & 0.094 & 0.001 & 485.019 & \hphantom{00}99.558\\
(120,400) & 0.0\hphantom{00}  & 0.001 & 0.0\hphantom{00} & 0.0\hphantom{00} & \textbf{0.918} & 0.080 & 0.001 & 492.608 & \hphantom{00}21.244\\
(240,800) & 0.0\hphantom{00} & 0.0\hphantom{00} & 0.0\hphantom{00} & 0.0\hphantom{00} & \textbf{0.914} & 0.086 & 0.0\hphantom{00} & 496.316 & \hphantom{000}3.519\\
\hline
(60,100) & 0.002 & 0.008 & 0.006 & 0.001 & \textbf{0.870} & 0.113 & 0.0\hphantom{00} & 472.816 & 688.994 \\
(120,200) & 0.0\hphantom{00} & 0.002 & 0.0\hphantom{00} & 0.0\hphantom{00} & \textbf{0.898} & 0.099 & 0.001  & 485.49\hphantom{0} & \hphantom{0}55.489  \\
(240,400) & 0.0\hphantom{00}  & 0.0\hphantom{00} & 0.0\hphantom{00} & 0.0\hphantom{00} & \textbf{0.928} & 0.071 & 0.001  & 492.699 & \hphantom{00}7.242\\
(480,800) & 0.0\hphantom{00} & 0.0\hphantom{00} & 0.0\hphantom{00} & 0.0\hphantom{00} & \textbf{0.933} & 0.067 & 0.0\hphantom{00} & 496.377 & \hphantom{00}1.654\\
\hline
\end{tabular}}
\end{table}

\begin{table}[!ht]
\tbl{Empirical distribution of $\hat{q}_n$, mean and mean squared error of $\widehat{\sigma}^2$ over 1000 independent replications for Model 2 and $\sigma^2=500$.}
{\begin{tabular}{|c|ccccccc|cc|}
 \cline{2-10}
 \multicolumn{1}{c}{}& \multicolumn{7}{|c|}{Distribution of $\hat{q}_n$} & \multicolumn{2}{c|}{$\widehat{\sigma}^2$}\\
\hline
$(p,n)$ & 0 & 1 & 2 & 3 & \textbf{4} & 5 & 6 & Mean & MSE\\
\hline
(30,100) &  0.079 & 0.088 & 0.090 & 0.0\hphantom{00} & \textbf{0.649} & 0.093 & 0.001 & 528.651 & 11223.872\\
(60,200) &  0.037 & 0.037 & 0.029 & 0.0\hphantom{00} & \textbf{0.794} & 0.103 & 0.0\hphantom{00} &  498.032 & \hphantom{0}1478.184 \\
(120,400) & 0.009 & 0.01\hphantom{0} & 0.005 & 0.0\hphantom{00} & \textbf{0.880} & 0.096 & 0.0\hphantom{00} & 494.613 & \hphantom{00}107.355\\
(240,800) & 0.003 & 0.0\hphantom{00} & 0.002 & 0.0\hphantom{00} & \textbf{0.918} & 0.075 & 0.002 & 496.813 & \hphantom{0000}8.770\\
\hline
(60,100) & 0.071 & 0.104 & 0.059 & 0.001 & \textbf{0.687} & 0.078 & 0 & 501.754  & 3126.083\\
(120,200) & 0.036 & 0.038 & 0.043 & 0.0\hphantom{00} & \textbf{0.809} & 0.074 & 0.0\hphantom{00} & 493.687 & \hphantom{0}438.063\\
(240,400) & 0.013 & 0.007 & 0.009 & 0.0\hphantom{00} & \textbf{0.900} & 0.071 & 0.0\hphantom{00} & 494.445 & \hphantom{00}39.686\\
(480,800) & 0.004 & 0.001 & 0.0\hphantom{00} & 0.0\hphantom{00} & \textbf{0.941} & 0.054 & 0.0\hphantom{00} & 496.836 & \hphantom{000}3.576\\
\hline
\end{tabular}}
\end{table}

\begin{figure}[!ht]
   \includegraphics{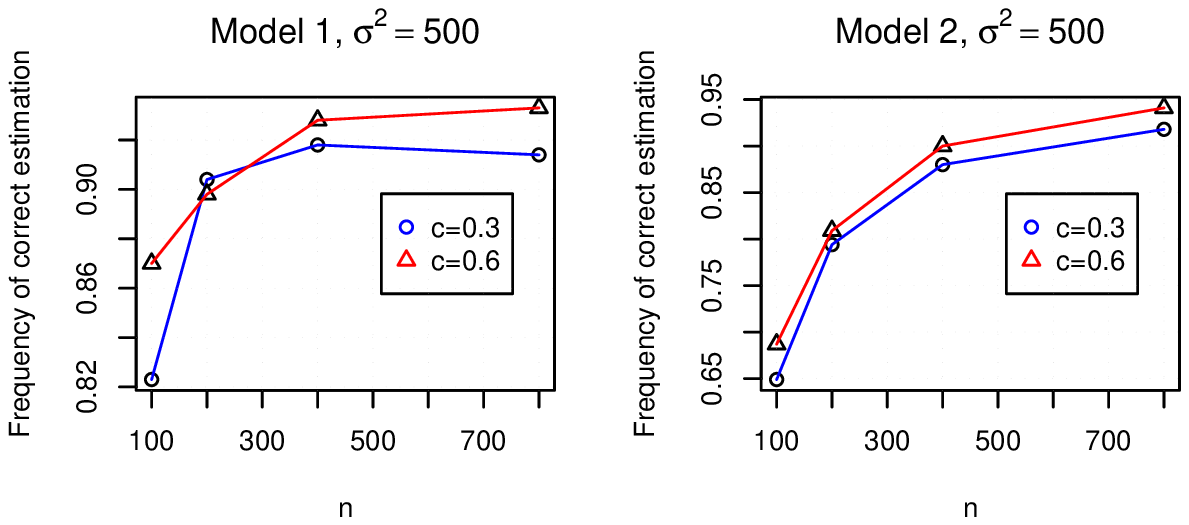}
   \caption{Frequency of $\hat{q}_n=q_0$ over 1000 independent replications with $\sigma^2=500$.}
\end{figure}

\begin{figure}[!ht]
   \includegraphics{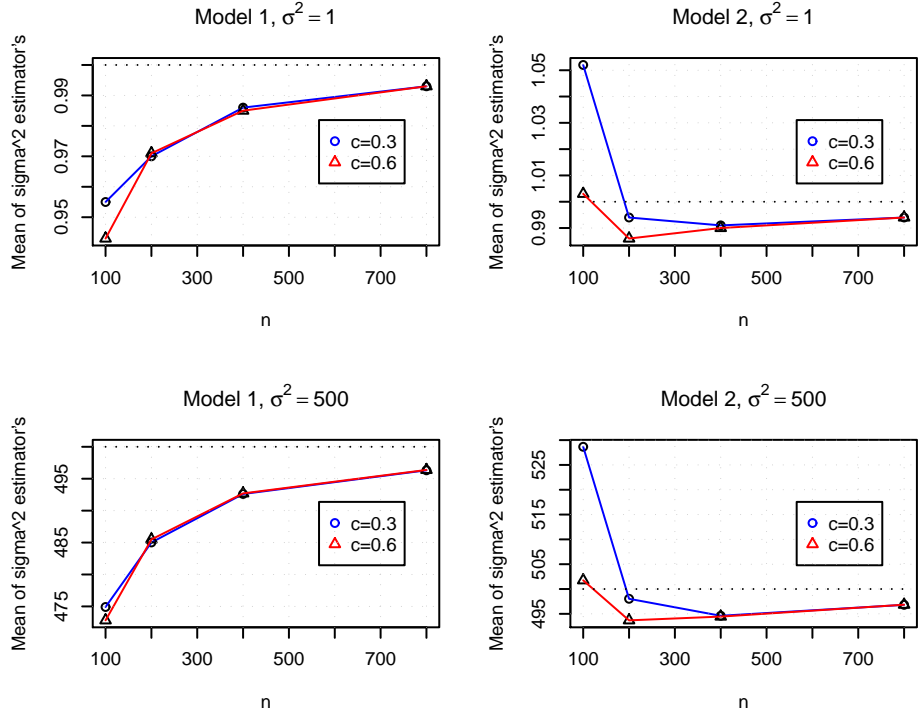}
   \caption{Mean of $\widehat{\sigma}^2$ over 1000 independent replications.}
\end{figure}

\newpage

\newpage
First, we can see the asymptotic consistancy of the estimator of $q_0$ in all the four cases. If we compare these simulations with the known variance case, we can see that the estimation is less accurate in the small $(p,n)$. Furthermore, as in the previous case, the convergence is slower in the $c=0.6$ case and the estimator performs better in Model 1 than in Model 2, for both values of $\sigma^2$. The estimation of $q_0$ is more accurate with an unknown variance of $\sigma^2=500$.\\

\noindent We also give the mean and mean squared error of $\hat{q}_n$ in the $\sigma^2=1$ case (Tables 4-5 and 6) to compare with Tables 1-2 and 3, where $\sigma^2=1$ also, to see the effect of its estimation. The variance and the bias are higher especially for small values of $(p,n)$ in this case with unknown variance.\\

\noindent The estimation of $\sigma^2$ performs well, but it seems to be underestimated. There is no particular difference between the two values of $c$ in Model 1 but in Model 2, contrary to the estimation of $q_0$, the convergence seems to be faster in the $c=0.6$ case for $\widehat{\sigma}^2$. The variance of the estimator decreases with the increase of $n$ and $p$, and is less in the $c=0.6$ case. As expected, the variance is lower in the $\sigma^2=1$ case.

\section{Comparaison with two Related Methods}

In signal processing or econometric literature, the factor model (or linear mixture model) is often used. This model is defined as follows: let $(\mathsf{x}_i = \mathsf{x}(t_i))_{(1 \le i \le n)}$ be an i.i.d $n$-sample of $p$-dimensional random vectors satisfying
 \begin{eqnarray*}
 \mathsf{x}(t)&=&\sum_{k=1}^{q_0} a_k s_k(t) + \sigma u(t)\\
 &=& \mathsf{A} s(t)+ \sigma u(t)\text{,}\\
 \end{eqnarray*}
where 
\begin{itemlist}
\item $s(t)=(s_1(t),\ldots,s_{q_0}(t))' \in \mathbb{R}^{q_0}$ are $q_0$ random factors (or signals) assumed to have zero mean, unit variance and mutually uncorrelated;
\item $A=(a_1,\ldots,a_{q_0})$ is a $p \times q_0$ fixed unknown matrix of rank $q_0$ (response vectors or factor loadings);
\item $\sigma \in \mathbb{R}$ is the noise level, $u \sim \mathcal{N}(0,\mathsf{I}_p)$.
\end{itemlist}

It is easy to show that in this case, the population covariance matrix takes the form of a spiked population model: the spikes are only slightly modified. If we denote by $\alpha'$ the vector of spikes in the factor model, we have the following relationship with our original vector $\alpha$
$$\alpha=\frac{\alpha'}{\sigma^2}+1\text{.}$$
Here determining the number of spikes $q_0$ means the detection of the number of factors/signals $q_0$. We will explain and compare two methods from Econometrics (Harding \cite{Harding}) and signal processing (Kritchman \& Nadler \cite{Nadler}), respectively.

\subsection{Method of Harding and comparison}

In his paper \cite{Harding}, Harding uses less restrictive hypotheses as the sequence $(u(t))$ is not necessarily independent, but he simulates a Gaussian model. His general idea is to compare the spectral moments of $S_n$ with the ESD of $S_n$ without the factors (or spikes), and to remove the largest eigenvalues one by one in $S_n$ until a ``distance" between the moments is minimum.\\ 

More precisely, the variance of the noise is seen as a parameter $\theta$ and his idea is to write $\mathsf{S}_n =\Xi_n + \Omega_n$ ($\mbox{rank}(\Xi_n)=q_0$) as a sum of a finite rank perturbation $\Xi_n$ of the noise covariance $\Omega_n$. Let $\Pi(\mathsf{S}_n)$ be the vector of the first $s$ moments of the empirical spectral distribution (ESD) of the covariance matrix~$\mathsf{S}_n$, $\Pi(\Omega_n)$ the equivalent for $\Omega_n$ and $\Pi(\theta)$ its limit as $p$ and $n \rightarrow +\infty$, $\frac{p}{n} \rightarrow c$. Here is the procedure of Harding:
\vspace{0.5cm}
\begin{itemlist}
\item First, compute the moments $\Pi(\theta)$ of the asymptotic eigenvalue distribution of the covariance matrix of $\Omega_n$ for a large $(p,n)$ sample.
\item By Bai and Silverstein (\cite{BS}), we have that
$p \left ( \Pi(\Omega_n) - \Pi(\theta)   \right )
							\overset{\mathcal{L}}{\longrightarrow} \mathcal{N}(\Delta,W)$. Consequently, estimate $\theta$ by:
$$\hat{\theta}_0 = \mbox{argmin}_{\theta} \underbrace {\left ( \Pi(\theta)-\Pi(\mathsf{S}_n \right )'\hat{W}^{-1} \left ( \Pi(\theta)-\Pi(\mathsf{S}_n \right )}_{J(\theta)}\text{.}$$
where $\hat{W}$ is a consistent estimate of $W$, calculate by estimated $\theta$ from a first step estimation with $W=\mathsf{I}_p$.
\item Next, remove the largest eigenvalue of the spectrum of $\mathsf{S}_n$ and re-estimate the parameter $\theta$ as previously to get a new estimate $\hat{\theta}_1$.
\item  This step is repeated by progressively removing large eigenvalues and for prefixed number of times to get a sequence of estimates $\hat{\theta}_2$,  $\hat{\theta}_3$, ...etc.
\item Finally, among the minimized objective functions $J(\hat{\theta}_i)$ choose the order one which corresponds to the smallest minimized value:
$$\hat{q}_0=\mbox{argmin}_i J(\hat{\theta}_i)\text{.}$$
\end{itemlist}

Actually, we know that for $q$ fixed and $p$, $n \rightarrow +\infty$, $\Pi(\mathsf{S}_n) \rightarrow \Pi(\theta)$. So the criterion is the minimization of the variance $W=W(\theta)$: it decreases until $q_0$ (until we have removed the eigenvalues corresponding to the spikes), then it stays stable. The procedure of Harding leads to an underestimation of $q_0$, at $p$ and $n$ fixed. That is why he penalized the function $J$ with a function of type $k\hat{\theta}g(p,n)$, where $k$ is the number of eigenvalues removed, $\hat{\theta}$ is the estimated variance at the step $q$ and $g(p,n)$ is a function such that $g(p,n) \rightarrow 0$ when $p$, $n \rightarrow +\infty$. The finally proposed choice for $g$ is the following function given by Bai and Ng \cite{Bai-Ng} based on a BIC criterion 
$$g(p,n)= \left ( \frac{p+n}{pn}\right ) \ln \left ( \frac{pn}{p+n}\right )\text{.}$$

For his simulation experiment, he tested four different ``distances" but we only keep the one based on the BIC criterion which is the best. Furthermore, we don't give all cases he tested. The simulation design was a little bit different, indeed Harding does not choose the spikes directly, but he generates $s(t)$ as a Gaussian law $\mathcal{N}(0,\mathsf{I}_p)$ and $A$ in a deterministic way. We calculate the corresponding spikes and it leads to the following values:
\vspace{0.5cm}
\begin{itemlist}
\item $(p,n)=(30,100)$: 
$(\alpha_5,\alpha_4,\alpha_3,\alpha_2,\alpha_1)=(3.817,6.877,10.038,16.973,258.719)$   
\item $(p,n)=(90,100)$: $(\alpha_5,\alpha_4,\alpha_3,\alpha_2,\alpha_1)=(3.692,7.276,10.785,18.101,259.010)$
\item $(p,n)=(210,300)$: $(\alpha_5,\alpha_4,\alpha_3,\alpha_2,\alpha_1)=(3.649,7.377,10.992,18.418,259.083)$
\item $(p,n)=(250,500)$: $(\alpha_5,\alpha_4,\alpha_3,\alpha_2,\alpha_1)=(3.634,7.448,11.057,18.453,259.005)$
\end{itemlist}
\vspace{0.5cm}
Nonetheless, these cases stay very close. Below we compare his results to ours. We only give in Table 9 the mean and mean squared errors of the estimator as reported in Harding's~paper.

\begin{table}[!ht]
\tbl{Compared mean and mean squared error of our $\hat{q}_n$ and $\widehat{\sigma}^2$ and those of Harding over 5000 independent replications and $\sigma^2=1$.}
{\begin{tabular}{|c|cccc|cccc|}
\cline{2-9}
\multicolumn{1}{c}{}& \multicolumn{4}{|c|}{$\hat{q}_0$} & \multicolumn{4}{c|}{$\widehat{\sigma}^2$}\\
 \cline{2-9}
 \multicolumn{1}{c}{}& \multicolumn{2}{|c|}{Harding estimator} & \multicolumn{2}{c|}{Our estimator} & \multicolumn{2}{c}{Harding estimator} & \multicolumn{2}{|c|}{Our estimator}\\
\hline
$(p,n)$ & Mean & MSE & Mean & MSE & Mean & MSE & Mean & MSE\\
\hline
(30,100) & 5.028 & 0.028 & 5.087 & 0.266 & 0.942 & 0.004 & 0.946 & 0.008\\
(90,100) &  5.040 & 0.048 & 5.049 & 0.232 & 0.944 & 0.001 & 0.943 & 0.0\hphantom{00}\\
(210,300) & 5.004 & 0.004 & 5.087 & 0.082 & 0.982 & 0.0\hphantom{00} & 0.980 & 0.0\hphantom{00}\\
(250,500) & 5.002 & 0.002 & 5.077 & 0.072 & 0.989 & 0.0\hphantom{00} & 0.988 & 0.0\hphantom{00}\\
\hline
\end{tabular}}
\end{table}

Both methods perform well and their results are overall very close except that Harding's estimation yields a slightly smaller MSE for $\hat{q}_0$. However, one should have in mind that this estimation has a very complex construction and a rigorous justification of its different steps is still open. Moreover, the spikes in Table 9 are large and well-separated one from another; it remains unclear how this method will perform in a case where the spikes are much smaller and close like in Model~2, considered in Sections 3 and 4. By contrast, our estimator has a very simple construction and we proved its consistency under reasonable assumptions.

\subsection{Method of Kritchman \& Nadler and comparison}

These authors assume the Gaussian case. In the absence of spikes, $n\mathsf{S}_n$ follows a Wishart distribution with parameters $n,p$. In this case, Johnstone \cite{Johnstone} gave the asymptotic distribution of the largest eigenvalue of $\mathsf{S}_n$.

\begin{proposition}
Let $\mathsf{S}_n$ be the sample covariance matrix of $n$ vectors distributed as $\mathcal{N}(0,\sigma^2 \mathsf{I}_p)$, and $\lambda_{n,1} \ge \lambda_{n,2} \ge \dots \ge \lambda_{n,p}$ be its eigenvalues. Then, when $n~\rightarrow~+\infty$, such that $\frac{p}{n} \rightarrow c > 0$
$$\mathbb{P} \left ( \frac{\lambda_{n,i}}{\sigma^2} < \frac{\beta_{n,p}}{n^{2/3}}s+b \right ) \rightarrow F_i(s)\mbox{, }s>0$$
where $b=(1+\sqrt{c})^2$, $\beta_{n,p}=\left(1+\sqrt{\frac{p}{n}}\right)\left(1+\sqrt{\frac{n}{p}}\right)^{\frac{1}{3}}$ and $F_i$ is the i-th Tracy-Widom distribution.
\end{proposition}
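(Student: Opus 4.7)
This is Johnstone's Tracy--Widom limit for the top eigenvalues of a real Wishart matrix; the proof we would follow is essentially the one given in his original paper. The starting point is that, under the Gaussian null hypothesis, $n\mathsf{S}_n/\sigma^2\sim W_p(I_p,n)$ is a real Wishart matrix, so its ordered eigenvalues $x_1>\cdots>x_p>0$ admit the explicit Laguerre Orthogonal Ensemble joint density
$$f(x_1,\dots,x_p)\;\propto\;\prod_{j<k}|x_j-x_k|\;\prod_{j=1}^{p}x_j^{(n-p-1)/2}e^{-x_j/2}.$$
Using the skew-orthogonal polynomials associated with the weight $x^{(n-p-1)/2}e^{-x/2}$ (essentially shifted Laguerre polynomials), the gap probability $\mathbb{P}(\lambda_{n,1}/\sigma^2<t)$, and more generally the joint distribution of the top few eigenvalues, can be written as a Fredholm Pfaffian of a $2\times 2$ matrix correlation kernel $K_{n,p}$ built from these polynomials.

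The second step is the edge rescaling. Substituting $x=nb+n^{1/3}\beta_{n,p}s$ with $b=(1+\sqrt{c})^2$ and $\beta_{n,p}=(1+\sqrt{p/n})(1+\sqrt{n/p})^{1/3}$, one needs to prove that the rescaled kernel $K_{n,p}$ converges in trace class on $L^2([s_0,+\infty))$ to the Airy kernel that defines the Tracy--Widom GOE process. This is where the work actually takes place: the key input is a sharp Plancherel--Rotach type asymptotic for the Laguerre polynomials $L_n^{(\alpha_n)}$ with parameter $\alpha_n=(n-p-1)/2$, uniform in the degree and over an $n^{-1/3}$-window around the largest zero. The standard route is a steepest-descent analysis of the classical contour integral representation of $L_n^{(\alpha_n)}$, which, after a Liouville--Green change of variables, produces the Airy function as the local limit at the soft edge.

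Once trace-class convergence of the kernel is established, continuity of the Fredholm Pfaffian in its kernel yields convergence of the joint CDF of the top eigenvalues to the corresponding gap probability of the Airy$_1$ point process; its $i$-th marginal is, by definition, the $i$-th Tracy--Widom GOE distribution $F_i$. The centering $b$ and the scale $n^{-2/3}\beta_{n,p}$ in the statement are exactly those dictated by the edge substitution above.

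\textbf{Main obstacle.} The entire technical content lies in the uniform Plancherel--Rotach analysis of $L_n^{(\alpha_n)}$ at the soft edge, with error bounds strong enough to give trace-class convergence of $K_{n,p}$ rather than mere pointwise convergence of the kernel entries. Once that estimate is in hand, the reduction to a Pfaffian, the passage to the Airy kernel, and the identification with the Tracy--Widom marginals are essentially mechanical.
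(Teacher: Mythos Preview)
Your outline is a faithful high-level sketch of Johnstone's original argument (LOE joint density, Fredholm Pfaffian via skew-orthogonal Laguerre polynomials, Plancherel--Rotach edge asymptotics, trace-class convergence to the Airy$_1$ kernel), and I see no genuine gap in it as a \emph{plan}; the caveats you flag about uniform asymptotics and trace-class control are exactly the places where the work lives.

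However, you should be aware that the paper does \emph{not} prove this proposition at all. It is stated in Section~5.2 purely as a quoted result, attributed to Johnstone~\cite{Johnstone}, in order to explain the construction of the Kritchman--Nadler estimator. No proof or sketch is given, and none is intended: the proposition plays no role in the paper's own results (Theorem~\ref{consistance}), which rely instead on Propositions~\ref{Paul} and~\ref{Maida}. So there is nothing to compare your approach against; the ``paper's proof'' is simply a citation.
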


Assuming the variance $\sigma^2$ is known. To distinguish a spike eigenvalue $\lambda$ from a non-spike one at an asymptotic significance level $\gamma$, their idea is to check whether
\begin{eqnarray} \label{test}
\lambda_{n,k} > \sigma^2 \left ( \frac{\beta_{n,p-k}}{n^{2/3}}s(\gamma)+b \right )
\end{eqnarray}
where the value of $s(\gamma)$ can be found by inverting the Tracy-Widom distribution. This distribution has no explicit expression, but can be computed from a solution of a second order Painlev\'e ordinary differential equation. Their estimator is based on a sequence of nested hypothesis tests of the following form: for $k=1,2,\ldots,\mbox{min}(p,n)-1$,
$$\mathcal{H}_0\mbox{: } q_0 \ge k \mbox{ }vs.\mbox{ }\mathcal{H}_1\mbox{: }q_0 \le k-1 \mbox{ .}$$
For each value of $k$, they test the likelihood of the $k$-th eigenvalue $\lambda_{n,k}$ as arising from a signal or from noise as (\ref{test}). If (\ref{test}) is satisfied, $\mathcal{H}_0$ is accepted and $k$ is increased by one. The procedure stops once an instance of  $\mathcal{H}_0$ is rejected and the number of spikes is estimated to be $\widehat{q}_{n,2}=k-1$. Formally, their estimator is defined by
$$\widehat{q}_{n,2}=\mbox{argmin}_k \left ( \lambda_{n,k} < \widehat{\sigma}^2 \left (\frac{\beta_{n,p-k}}{n^{2/3}}s(\gamma)+b \right ) \right ) -1\text{.}$$

When $\sigma^2$ is unknown, they estimate it by the same method we used. For their simulations, they use four different settings, with $\sigma^2=1$ 
\begin{itemlist}
\item A1: $\alpha'=(200,50)$, $c=4$ (i.e. $\alpha=(201,51)$);
\item A2: $\alpha'=(200,50)$, $c=1$;
\item B1: $\alpha'=(200,50,10,5)$, $c=4$ (i.e. $\alpha=(201,51,11,6))$;
\item B2: $\alpha'=(200,50,10,5)$, $c=1$;
\end{itemlist}
with $p=64$ and $p=1024$. Notice that contrary to ours and those of Harding, in their simulation, $c > 1$ and the difference between two consecutive spikes is higher. We add two settings with different variance
\begin{itemlist}
\item A2': $\alpha'=(200,50)$, $c=1$, $\sigma^2=20$ (i.e. $\alpha=(11,3.5)$);
\item B2': $\alpha'=(200,50,10,5)$, $c=1$, $\sigma^2=2$ (i.e. $\alpha=(101,26,6,3.5)$);
\end{itemlist}
and $p=64$. The results are displayed in tables 10 and 11.

\begin{table}[!hb]
\tbl{Summary for $p=64$ showing the frequency of $\hat{q_0}=q_0$.}
{\begin{tabular}{|ccc|}
\hline
$Setting$ & Our estimator & Estimator KN \\
\hline
A1\hphantom{'}; $(p,n)=(64,16)$ & 0.943 & 0.994  \\
A2\hphantom{'}; $(p,n)=(64,64)$ &  0.966 & 0.993  \\
A2'; $(p,n)=(64,64)$ &  0.602 & 0.513   \\
B1\hphantom{'}; $(p,n)=(64,16)$ & 0.348 & 0.238  \\
B2\hphantom{'}; $(p,n)=(64,64)$ & 0.947 & 0.995  \\
B2'; $(p,n)=(64,64)$ & 0.734& 0.682 \\
\hline
\end{tabular}}
\end{table}

With small $p$ and $n$, both estimator performs well, except for the A2', B1, and B2' cases where the spikes are closer to $1+\sqrt{c}$ than in the other cases.

\begin{table}[!ht]
\tbl{Summary for $p=1024$ showing the frequency of $\hat{q_0}=q_0$.}
{\begin{tabular}{|ccc|}
\hline
$Setting$ & Our estimator & Estimator KN\\
\hline
A1; $(p,n)=(1024,256)\hphantom{4}$ & 0.995 & 0.994 \\
A2; $(p,n)=(1024,1024)$ &  0.986 & 0.993\\
B1; $(p,n)=(1024,256)\hphantom{4}$ & 0.999 & 0.999 \\
B2; $(p,n)=(1024,1024)$ & 0.986 & 0.994\\
\hline
\end{tabular}}
\end{table}
~\\
~\\
With larger $p$ and $n$, the results from both methods are comparable. Nevertheless, theoritical properties remain unclear for the KN estimator: it is proved that 
$$\lim_{p,n \rightarrow +\infty} \mathbb{P} \left ( \widehat{q}_{n,2} \ge q_0 \right ) = 1 \text{,}$$
and, in the one factor case ($q_0=1$) that
$$\lim_{p,n \rightarrow +\infty} \mathbb{P} \left ( \widehat{q}_{n,2} > q_0 \right ) = \gamma\text{.}$$
That is by construction, the proposed estimator cannot be fully consistent but nearly consistent with an incompressible asymptotic error of $\gamma$. Actually the authors are using a very small test level $\gamma=0.005$ in their experiments. Whether this property remains true for general case with more than one spike stays open and even so, this near-consistency is a bit unsatisfactory from a theoretical point a view.

\section{Case of Spikes with Multiplicity greater than one}

The problem with two identical spikes is that the difference between the corres\-ponding eigenvalues of the sample covariance matrix will tend to zero. Nevertheless, our method still works: we can explain it by the fact that the convergence of the $\lambda_{n,i}$, for $i > q_0$ (non-spikes) is in $O_{\mathbb{P}} \left ( \frac{1}{n^{2/3}} \right )$, whereas that of the difference corresponding of two identical spikes is in $O_{\mathbb{P}} \left( \frac{1}{\sqrt{n}} \right )$ (Consequence of theorem 3.1 of Bai \& Yao \cite{Bai-Yao}). Furthermore, the variance in the convergence of this difference is $2\alpha^2\left ( 1- \frac{c}{(\alpha-1)^2}\right) \underset{+\infty}{\sim} 2\alpha^2$, which is quite high for high spikes. A complete justification of our method in this case with multiple spikes is still under investigation. Here we provide some simulation results in order to have a first idea about its performance.\\

We will only consider the known variance case. If it is not the case, the procedure explained before will apply without any problem. Here are the results with the same simulation design as previously, except that we introduce multiple spikes. We consider two models:

\begin{itemlist}
\item {\bf Model 3}: $q_0=6$, $(\alpha_1,\alpha_2,\alpha_3,\alpha_4,\alpha_5,\alpha_6)=(259.7,259.7,18,11.1,7.9,4.8)$;
\item {\bf Model 4}: $q_0=6$, $(\alpha_1,\alpha_2,\alpha_3,\alpha_4,\alpha_5,\alpha_6)=(7,6,6,6,5,4)$.
\end{itemlist}

For each model, two different values of $c$, 0.3 and 0.6, are considered, and we give in Figure 5 the frequency of $\hat{q}_n=q_0$ and in Table 12 the mean and the mean squared error of our estimator over 1000 independent replications.\\

\begin{table}[!ht]
\tbl{Mean and mean squared error of $\hat{q}_n$ over 1000 independent replications for Model 1 and 2.}
{\begin{tabular}{|c|cc|cc|}
\cline{2-5}
 \multicolumn{1}{c}{}& \multicolumn{2}{|c|}{Model 3, $q_0=6$} & \multicolumn{2}{c|}{Model 4, $q_0=6$}\\
\hline
$(p,n)$ & Mean & MSE & Mean & MSE\\
\hline
(30,100) & 6.085 & 0.168& 4.529  &  4.393\\
(60,200) & 6.077 & 0.121&  4.86\hphantom{0}  &  4.199\\
(120,400) & 6.088 & 0.082& 5.31\hphantom{0}  &  3.061\\
(240,800) & 6.073 & 0.068& 5.597  &  2.051\\
\hline
(60,100) & 6.043 & 0.151& 4.118 & 4.797\\
(120,200) & 6.092 & 0.108& 4.614 & 4.453\\
(240,400) & 6.081 & 0.074& 5.159 & 3.447\\
(480,800) &6.079 & 0.073& 5.562  &  2.058\\
\hline
\end{tabular}}
\end{table}

\begin{figure}[!ht]
   \includegraphics{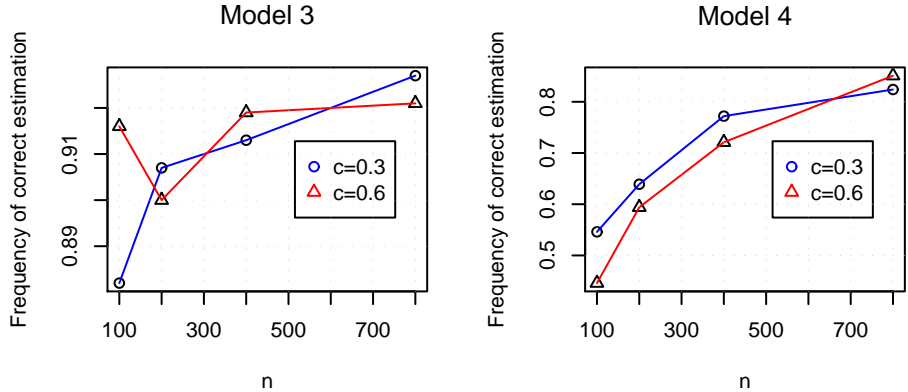}
   \caption{Frequency of $\hat{q}_n=q_0$ over 1000 independent replications.}
\end{figure}

In both cases, we can observe the asymptotic consistency of the estimator, but the convergence is slower in Model 4: indeed, the eigenvalue spacings are smaller. Furthermore, the values of the spikes are small, so that the variance in the convergence of the spikes is not very high and the fluctuations of the difference are smaller than in Model 3.

\section{Extension to the generalized spiked population model}

In \cite{Bai-Yao2}, the author define the generalized spiked population model: the covariance matrix is extended to a general T from I. Once we have 
corresponding Tracy-Widom limits for sample eigenvalues converging to the 
edges of support intervals, our approach can be readily adapted to this 
situation. However such results are lacking.

\end{document}